\DeclareRobustCommand{\SkipTocEntry}[5]{}
\definecolor{blue}{rgb}{.255,.41,.884} 
\definecolor{red}{rgb}{1, 0, 0} 
\definecolor{green}{rgb}{.196,.804,.196} 
\definecolor{yellow}{rgb}{1,.648,0} 
\definecolor{pink}{rgb}{1,0.5,0.5}
\newtheorem{theorem}{Theorem}[section]
\newtheorem{lemma}[theorem]{Lemma}
\theoremstyle{definition}
\newtheorem{definition}[theorem]{Definition}
\theoremstyle{remark}
\newtheorem{remark}[theorem]{Remark}
\newcommand{\IIo}{\hspace{0.4mm}\mathring{\rm{ I\hspace{-.2mm} I}}{\hspace{.0mm}}}
\newcommand{\IIIo}{{\mathring{{\bf\rm I\hspace{-.2mm} I \hspace{-.2mm} I}}{\hspace{.2mm}}}{}}
\newcommand{\IVo}{{\mathring{{\bf\rm I\hspace{-.2mm} V}}{\hspace{.2mm}}}{}}
\newcommand{\otop}{\mathring{\top}}
\newcommand{\ID}{{{{\bf\rm I\hspace{-.3mm} D}}{\hspace{0.1mm}}}}
\newcounter{mnotecount}
\newcommand{\mnotex}[1]
{\protect{\stepcounter{mnotecount}}$^{\mbox{\footnotesize $\bullet$\themnotecount}}$ 
\marginpar{
\raggedright\tiny\em
$\!\!\!\!\!\!\,\bullet$\themnotecount: #1} }
\numberwithin{equation}{section}
\newcommand{\cc}{\boldsymbol{c}}
\renewcommand{\=}{\stackrel\Sigma =}
\renewcommand\geq{\geqslant}
\newcommand\reallywidehat[1]{%
\savestack{\tmpbox}{\stretchto{%
  \scaleto{%
    \scalerel*[\widthof{\ensuremath{#1}}]{\kern-.6pt\bigwedge\kern-.6pt}%
    {\rule[-\textheight/2]{1ex}{\textheight}}
  }{\textheight}%
}{0.5ex}}%
\stackon[1pt]{#1}{\tmpbox}%
}
\newcommand{\bdot }{\mathop{\lower0.33ex\hbox{\LARGE$\cdot$}}}
\definecolor{ao}{rgb}{0.0,0.0,1.0}
\definecolor{forest}{rgb}{0.0,0.3,0.0}
\definecolor{red}{rgb}{0.8, 0.0, 0.0}
\newcommand{\FF}[1]{\mathring{\underline{\overline{\rm{#1}}}}}
\begin{document}

\subjclass[2010]{
53C18, 53A55, 53C21, 58J32.
}

\renewcommand{\today}{}
\title{Non-existence of higher-order conformal fundamental forms in odd dimensions
}
%
%
%

\author{ Samuel Blitz${}^\flat$ }

\address{${}^\flat$
 Department of Mathematics and Statistics \\
 Masaryk University\\
 Building 08, Kotl\'a\v{r}sk\'a 2 \\
 Brno, CZ 61137} 
   \email{blitz@math.muni.cz}
 
\vspace{10pt}


\maketitle

\pagestyle{myheadings} \markboth{S. Blitz}{S. Blitz}

\begin{abstract}
Conformal fundamental forms populate a minimal generating set for low differential order invariants of conformal hypersurface embeddings. In this work we complete the characterization of conformal fundamental forms by proving the general non-existence of higher-order conformal fundamental forms when the embedded hypersurface is even dimensional.
\vspace{0.5cm}

\noindent
{\sf \tiny Keywords: 
Conformal geometry, extrinsic conformal geometry and hypersurface embeddings, conformally compact}

\end{abstract}

\vspace{2cm}

\section{Introduction}

The Poincar\'e--Einstein problem describes the problem of finding an Einstein metric in the interior of conformally compact manifold. After conformal compactification, the problem may be recast as solving the almost Einstein equation~\cite{GoverAE} on the conformal manifold $M^d$ with boundary $\Sigma$, with boundary data fixed by the embedding of the conformal boundary. It was observed in~\cite{LeBrun,FGbook,GoverAE} that any solution necessarily has an umbilic boundary, making the trace-free second fundamental form an obstruction to the existence of a solution. Later work by~\cite{curry-G} established the presence of another obstruction: the Fialkow tensor~\cite{Fialkow}. Notably, both the trace-free second fundamental form and the Fialkow tensor are conformally invariant. Recently~\cite{BGW1}, a finite family of conformally invariant tensors that are higher jet analogs of the trace-free second fundamental form and the Fialkow tensor, called the \textit{conformal fundamental forms}, were constructed that provide an essentially complete accounting of the (formal) obstructions to solutions of the almost Einstein equation. Roughly, a $k$th conformal fundamental form denoted $\FF{k}$ is a conformally-invariant section of $\odot^2_{\circ} T^* \Sigma$ that has as its leading term $k-1$ derivatives of the metric in the transverse direction---see Section~\ref{formalism} for a more precise definition. The trace-free second fundamental form $\IIo$ and the trace-free part of the Fialkow tensor $\IIIo$ are the first two elements of this family.

On conformal manifolds more broadly, the problem of classifying invariants of conformal submanifold embeddings is of recent interest~\cite{burstall-calderbank,mondino2018,curry-gover-snell,blitz-silhan}. Most interest is in constructing integrated conformal invariants, but by analogy with the seminal work of Alexakis, it is conjectured (and indeed proven for low-lying dimensions) that integrated conformal invariants of 2- and 4-dimensional submanifolds can be expressed in terms of a topological invariant, integrals along the submanifold of conformally-invariant scalars, and integrals of divergences~\cite{CGKTW}. The classification problem, then, may be decomposed to include the problem of classifying conformal invariants of submanifolds.

The problem in codimension larger-than-one is still quite open (see~\cite{curry-gover-snell,blitz-silhan} for examples of recent efforts), but significant work has gone toward understanding conformal hypersurface embeddings (see ~\cite{graham-volume,mollers-orsted-zhang,case2018,GPt,will1,blitz-classifying} for a small sample). Indeed, in~\cite{blitz-classifying} it was shown that when there exists conformal fundamental forms $\{\IIo, \IIIo, \ldots, \FF{k}\}$, they necessarily appear in a minimal generating set for conformal hypersurface invariants that include at most $k-1$ normal derivatives of the metric. Given this result, it is important to establish precisely when these conformal fundamental forms exist; in~\cite{BGW1}, it was shown that for each $k \in \{2, \ldots, \lceil \tfrac{d+1}{2} \rceil \}$, a conformal fundamental form $\FF{k}$ exists, and in~\cite{Blitz1} it was shown that when $d$ is even, there exist conformal fundamental forms $\FF{k}$ for each $k \in \{2, \ldots, d-1\}$. As a consequence of the nonlinear Dirichlet--to--Neumann map for the almost Einstein problem being of order $d-1$, we do not consider conformal fundamental forms at this ``critical'' transverse order (or beyond).

In this paper, we provide the classification of ``subcritical'' conformal fundamental forms, proving that the constructions described above are complete. This is captured in the main result:
\begin{theorem} \label{main-theorem}
For $d \geq 4$ even and every $k \in \{2, \ldots, d-1\}$ there exists a conformal fundamental form $\FF{k}$. For $d \geq 5$ odd, for each $k \in \{2, \ldots, \tfrac{d+1}{2}\}$ there exists a conformal fundamental form $\FF{k}$, but for each $k \in \{\tfrac{d+3}{2}, \ldots, d-1\}$ there does not exist a conformal fundamental form $\FF{k}$.
\end{theorem}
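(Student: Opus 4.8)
The existence statements are already in hand and I would simply invoke them: for $d$ even the full range $k\in\{2,\dots,d-1\}$ is covered by~\cite{Blitz1}, while for $d$ odd the range $k\in\{2,\dots,\tfrac{d+1}{2}\}$ is covered by~\cite{BGW1} (note that $\lceil\tfrac{d+1}{2}\rceil=\tfrac{d+1}{2}$ when $d$ is odd). So the plan is to concentrate entirely on the genuinely new content: the \emph{non-existence} of $\FF{k}$ for $d$ odd and $k\in\{\tfrac{d+3}{2},\dots,d-1\}$. Writing $n=d-1=\dim\Sigma$, a candidate $\FF{k}$ has leading transverse order $m=k-1$, and the claimed threshold is exactly $m\leq n/2$ (existence) versus $m>n/2$ (non-existence). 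Since $n$ is odd precisely when $d$ is even, this parity dependence is the signature of a Graham--Jenne--Mason--Sparling (GJMS)-type obstruction, and recovering it is the organizing idea of the proof.

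First I would make the defining property of $\FF{k}$ explicit as a symbol-plus-correction problem. By definition $\FF{k}$ is a section of $\odot^2_\circ T^*\Sigma$ of the form $\FF{k}=c\,(\nabla_n^{\,k-1}g)^{\top}_\circ+(\text{terms of lower transverse order})$, where the leading coefficient $c$ is fixed and the lower-order terms are an \emph{a priori} unconstrained natural combination of the second fundamental form, the ambient curvature, and their tangential and normal derivatives of the appropriate conformal weight and order. Fixing a conformal representative together with a unit conformal defining density and a Gauss-type normal gauge, I would compute the conformal variation $\delta_\omega(\text{leading term})$ under $g\mapsto e^{2\omega}g$. Conformal invariance of the full expression is then equivalent to demanding that the inhomogeneous part of $\delta_\omega(\text{leading term})$ be cancelled by $\delta_\omega$ of some admissible choice of correction terms. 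This is a purely linear condition: it asks whether a specific cocycle---the conformal variation of the leading symbol---lies in the image of the variation map on the space of lower-order natural tensors, i.e.\ whether it is a coboundary.

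Second, I would evaluate this variation on the flat model $\mathbb{R}^{d-1}\subset\mathbb{R}^d$ (equivalently $S^{d-1}\subset S^d$), where the recursion determining the normal jets of the metric is the Fefferman--Graham recursion for the almost-Einstein filling with boundary $\Sigma$. The order-$m$ correction is then fixed algebraically from the lower coefficients by an operator whose indicial (resonance) factor is invertible for $m\leq n/2$ but develops a nonzero residue once $m>n/2$, and the resonance responsible for this is present only when $n$ is even. For $m\leq n/2$ the corrections can be solved for order by order, reproducing the existence results; for $m>n/2$ the degeneracy leaves a residual conformally non-invariant piece that no correction can absorb, since the correction map lands in a complementary subspace. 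Translating $m>n/2$ back to $k$ gives precisely $k>\tfrac{n}{2}+1=\tfrac{d+1}{2}$, i.e.\ $k\geq\tfrac{d+3}{2}$, the non-existence range in the statement, while $n$ odd (that is, $d$ even) removes the resonance entirely, consistent with the unobstructed even-$d$ case.

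The hard part is not locating the degeneracy but proving that the residual piece is genuinely \emph{not} a coboundary---that no choice of lower-order natural corrections can cancel it. This requires controlling the entire space of admissible correction terms at the given weight and transverse order, a space whose dimension grows with $k$, and showing that the variation map misses the obstruction class. I would organize this with a weight-and-parity bookkeeping argument: all corrections of transverse order strictly below $m$ contribute to $\delta_\omega$ only within the image subspace, so it suffices to isolate the top-order ($=m$) tangential-trace structure of the obstruction and verify on the explicit model that its relevant component is nonzero and of the wrong type to be hit. Some care is needed with the trace-free projection onto $\odot^2_\circ T^*\Sigma$ and with distinguishing intrinsic from extrinsic derivatives, but once the model computation exhibits a single nonvanishing scalar obstructing the coboundary equation, conformal naturality propagates the non-existence from the model to all hypersurface embeddings, completing the proof.
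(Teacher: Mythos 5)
Your treatment of the existence claims is fine, and your heuristic for the location of the threshold (a GJMS-type resonance at transverse order $m=k-1>\tfrac{d-1}{2}$, present only when $\dim\Sigma$ is even) correctly predicts the non-existence range $k\geq\tfrac{d+3}{2}$. But the proposal has a genuine gap exactly where you flag ``the hard part'': you never actually show that the residual conformal variation of the leading term fails to be a coboundary. The space of admissible natural corrections to $\otop\partial_s^{k-1}g_{ab}$---arbitrary natural tensors built from the metric jets, the conormal, the curvature and its derivatives at the given weight and transverse order---is large and unstructured, and your assertion that ``all corrections of transverse order strictly below $m$ contribute to $\delta_\omega$ only within the image subspace'' is precisely the statement that needs proof; the proposed ``weight-and-parity bookkeeping'' is named but not carried out. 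Moreover, evaluating on the flat model is problematic for this purpose: there all curvature-built corrections and their variations vanish together with the candidate obstruction, so the model does not by itself exhibit ``a single nonvanishing scalar obstructing the coboundary equation.''

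The paper closes this gap by a reduction your proposal lacks. It first extends $\FF{k}$ off $\Sigma$ to $\FF{k}^{\rm e}=\ID_\sigma^{k-2}\IIo^{\rm e}\in\Gamma(\odot^2_\circ T^*M[3-k])$ and observes that, by varying the embedding over metrics $g+s^{k-1}h$, this extension is surjective onto trace-free symmetric $2$-tensors of weight $3-k$. Consequently a putative $\FF{d-k+1}$ would yield a natural conformally invariant operator $\Gamma(\odot^2_\circ T^*M[3-k])\rightarrow\Gamma(\odot^2_\circ T^*\Sigma[k-d+2])$ of transverse order $d-2k+1$ acting on \emph{arbitrary} sections $t_{ab}$. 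For such operators the counterterm analysis becomes finite: the only natural candidate whose conformal variation can cancel the term $\bar{\nabla}_{\bar{\Upsilon}}\bar{\Delta}^{\frac{d-1}{2}-m}t^\top_{ab}$ produced by the leading symbol is $\bar{\Delta}^{\frac{d+1}{2}-m}t^\top_{ab}$, and its variation carries the coefficient $(\tfrac{d+1}{2}-m)(2m+2w-6)$, which vanishes exactly at the forced weight $w=3-m$. That single vanishing coefficient is the entire non-existence mechanism, and nothing in your proposal identifies it or a substitute for it. To salvage your direct approach you would need either to reproduce this reduction (so that the correction space becomes tractable) or to classify all natural corrections at each weight and transverse order, which is a substantially harder undertaking.
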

As the existence of the aforementioned conformal fundamental forms was proven elsewhere, proving this theorem amounts to proving nonexistence of certain higher-order conformal fundamental forms when $d$ is odd. Briefly, the theorem is proven in two steps. First, we establish that certain conformally invariant differential operators mapping sections in $M$ to sections in $\Sigma$ do not exist; second, we establish that the existence of the higher order conformal fundamental forms in odd dimensions would imply the existence of such operators. This proof runs parallel to the proof of the existence of GJMS-like operators on $\Sigma$~\cite{Matsumoto}, and the failure of these conformal fundamental forms to exist is similar in spirit to the failure of higher-order GJMS operators to exist in low-lying dimensions~\cite{GoverHirachi}.

\section{Conformal Hypersurface Operators and Fundamental Forms} \label{formalism}


We begin by providing careful definitions of natural conformal hypersurface operators and invariants. To that end, we first define
$$\mathcal{M}^d := \{(\Sigma, (M,\cc), \iota) \;|\; \iota: \Sigma \hookrightarrow (M^d,\cc)\}\,,$$
the space of conformal hypersurface embeddings into $d$-dimensional conformal manifolds. Going forward, all manifolds $\Sigma$ and $M$ are assumed to be smooth manifolds, and $\cc$ is a conformal structure, where Riemannian metrics $g,g' \in \cc$ implies that there exists a positive function $\Omega \in C^\infty M$ such that $g' = \Omega^2 g$. Now given an embedding $\iota : \Sigma \hookrightarrow (M^d,\cc)$, we may pull back the conformal structure via the embedding, giving rise to a unique conformal structure $\bar{\cc} := \iota^* \cc$ on the hypersurface $\Sigma$. Note that we will use overbars $\bar{\bullet}$ to denote geometric quantities and operators intrinsic to manifold $(\Sigma,\bar{\cc})$.

On conformal manifolds, the natural geometric objects are sections of density bundles (see~\cite{} for an explicit construction). For our purposes, we may view these sections as double equivalence classes of metric representatives and sections of a vector bundle over $M$ (or $\Sigma$), with equivalence defined by mutual rescaling of the metric representative and the section. For example, we can define the weight-$w$ section $\eta$ according to
$$\eta := [g ; h] = [\Omega^2 g; \Omega^w h] \in \Gamma(\odot^2_{\circ} T^* M[w])\,.$$


To any element $(\Sigma,(M,\cc),\iota) \in \mathcal{M}^d$, we may associate a family of natural conformally-invariant differential operators (see~\cite{GPt} for a precise definition). That is, we define
\begin{align*}
\operatorname{Ops}^{(d)}_{(\Sigma,(M,\cc),\iota)} := \{&op :  \Gamma(T^\Phi M[w']) \rightarrow \Gamma(T^\Psi \Sigma[w']) \; | \\& \Phi, \Psi \text{ irreducible representations on } TM, T\Sigma, \text{ respectively}\; | \\& op \text{ natural}\}\,.
\end{align*}
By ``natural'' in the above definition, we merely mean an extension to hypersurface embeddings of the typical naturalness notion used in Riemannian geometry~\cite{stredder}---this is formally provided in~\cite{GPt}. For practical purposes, this amounts to an operator $op \in \operatorname{Ops}^{(d)}_{(\Sigma,(M,\cc),\iota)}$ being expressible (after trivializing the conformal structure to a metric representative $g \in \cc$) as a polynomial in the Levi-Civita connection $\nabla$ of $(M,g)$ with tensor-valued pre-invariants as defined in~\cite[Section 2.4]{will1}. In fact, these coefficients may equivalently each be expressed as a polynomial in the metric $g$, its inverse $g^{-1}$, the unit conormal $\hat{n}$, the Riemann curvature tensor $R$, and their covariant derivatives. As each space of natural conformal hypersurface differential operators corresponds to a single element of $\mathcal{M}^d$, we may define the total space
$$\operatorname{Ops}^{(d)} := \bigsqcup_{m \in \mathcal{M}^d} \operatorname{Ops}^{(d)}_m\,,$$
equipped with a natural projection $\pi := \operatorname{Ops}^{(d)} \rightarrow \mathcal{M}^d$. As such, a natural conformal hypersurface differential operator corresponds to any section
$$op : \mathcal{M}^d \rightarrow \operatorname{Ops}^{(d)}$$
such that $\pi \circ op = \operatorname{id}_{\mathcal{M}^d}$. (Note that we may similarly define ``bulk'' conformal operators from $\Gamma(T^\Phi M[w])$ to $\Gamma(T^\Psi M[w'])$.)

\begin{remark}
Observe that one may recast the above construction in the language of category theory, but for our purposes it is not required.
\end{remark}

We may also define natural conformal hypersurface invariants. Indeed, a natural conformal hypersurface invariant is identified with any zeroth order natural conformal hypersurface differential operator with domain given by the trivial representation. Viewing these invariants as the images of those differential operators on the unit function, we can instead treat them as sections
\begin{align*}
I : \mathcal{M}^d \rightarrow \bigsqcup_{m \in \mathcal{M}^d} \{& \mathcal{I} \in \Gamma(T^\Psi \Sigma_m[w']) \;| \\& \Psi \, \text{ an irreducible representation on } T \Sigma_m \; | \; \mathcal{I} \text{ natural}\}\,,
\end{align*}
where $m = (\Sigma_m, (M^d,\cc),\iota)$ and $\varpi \circ I = \operatorname{id}_{\mathcal{M}^d}$, with $\varpi$ the natural projection to $\mathcal{M}^d$.

For the purposes of this article, we require a specific family of natural conformal hypersurface invariants, occupying the total space of sections of symmetric trace-free rank 2 weight $3-k$ tensors on hypersurfaces:
$$E_k := \bigsqcup_{m \in \mathcal{M}^d} \{e_k \in \Gamma(\odot^2_{\circ} T^* \Sigma_m[3-k]) \; | \; e_k \text{ natural}\}\,.$$
As before, this space is equipped with the natural projection $\pi_k : E_k \rightarrow \mathcal{M}^d$. We can now define conformal fundamental forms.

\begin{definition}
A $k$th conformal fundamental form $\FF{k}$ is any section
$$\FF{k} : \mathcal{M}^d \rightarrow E_k\,,$$
satisfying:
\begin{itemize}
\item $\pi_k \circ \FF{k} = \operatorname{id}_{\mathcal{M}^d}$,
\item for any $(\Sigma,(M^d,\cc),\iota) \in \mathcal{M}^d$, a trivialization $g \in \cc$, and coordinates $(s, x^1, \ldots, x^{d-1})$ in a small neighborhood around a point $p \in \iota(\Sigma)$ with $s$ a unit defining function for $\Sigma$, we may express
$$\FF{k}(\Sigma,(M^d,\cc),\iota) = \otop \partial_s^{k-1} g_{ab} + \text{ltots}(g)\,.$$
\end{itemize}
\end{definition}
In the above definition, we denote $\otop := \operatorname{tf} \circ \top$, where $\operatorname{tf}$ indicates the trace-free projection of the tensor and $\top$ denotes the projection operator to the hypersurface. A unit defining function is any function $s \in C^\infty M$ such that $s \= 0$ and $|ds|_g = 1$, and $\operatorname{ltots}$ is any linear differential operator that takes strictly fewer than $k-1$ partial derivatives $\partial_s$.

There exist known~\cite{BGW1} symbolic formulas for some low-lying conformal fundamental forms. The trace-free second fundamental form is the first example, typically denoted $\IIo := \iota^* \nabla n$, where $n$ is any extension of the unit conormal away from $\Sigma$. The third conformal fundamental form has formula $\IIIo := \iota^* W(\hat{n}, \cdot, \cdot, \hat{n})$, where $W$ is the Weyl tensor. The fourth fundamental form has a formula (except in $d=5$) given in abstract index notation as
$$\IVo_{ab} := \iota^* (n^c C_{c(ab)}) + H \IIIo_{ab} + \tfrac{1}{d-5} \bar{\nabla}^c \iota^* (W_{c(ab)d} n^d)\,,$$
where $C_{abc} := \tfrac{1}{d-3} \nabla^d W_{dcab}$ is the Cotton--York tensor, $H$ is the mean curvature, and round brackets $()$ denote unit-normalized symmetrization.

We are now equipped to prove the main result.


\section{Main Result}
To prove the main result, we first need a technical non-existence result.
We define a map sending a conformal hypersurface embedding to the space of all natural conformal hypersurface differential operators on that embedding with the relevant domain and codomain and a specific transverse order:
\begin{align*}
\mathcal{N}^{(k)}_{w,w'} : &\mathcal{M}^d \rightarrow\;  2^{\operatorname{Ops}^{(d)}} \\
&(\Sigma,(M,\cc),\iota) \mapsto \; \{N^{(k)}_{w,w'} \in \operatorname{Ops}^{(d)} \; | \\& \;  N^{(k)}_{w,w'} : \Gamma(\odot^2_\circ T^* M[w]) \rightarrow \Gamma(\odot^2_{\circ} T^* \Sigma[w'])\; |  \; \operatorname{to}(N^{(k)}_{w,w'}) = k\}\,.
\end{align*}
In the above, the \textit{transverse order} $\text{to}(op)$ of a differential operator $op: \Gamma(T^\Phi M[w]) \rightarrow \Gamma(T^\Psi \Sigma[w'])$ is defined as the integer $k$ such that for every $\ell > k$ we have that, in any trivialization $g \in \cc$, there exists a section $h \in \Gamma(T^\Phi M)$ such that
$$op(s^{\ell} h) = 0 \qquad \text{ and } \qquad op(s^k h) \neq 0\,,$$
for the unit defining function $s$ of $\Sigma$.

Our first aim, then, is to show that for certain values of $d$, $k$, $w$, and $w'$, $\mathcal{N}^{(k)}_{w,w'}(m) = \emptyset$ for every $m \in \mathcal{M}^d$.

\begin{lemma} \label{operator-nonexistence}
Let $d \geq 5$ be an odd integer and let $m \in \{2, \ldots, \tfrac{d-1}{2}\}$. Then for every $m \in \mathcal{M}^d$,
$$\mathcal{N}_{3-m,m-d+2}^{(d-2m+1)}(m) = \emptyset\,.$$
\end{lemma}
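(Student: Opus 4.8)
The plan is to reduce the statement to the flat homogeneous model and there detect the obstruction through the leading transverse symbol. By naturality, any operator $N^{(k)}_{w,w'} \in \mathcal{N}^{(k)}_{w,w'}(m)$ is a universal polynomial expression in $g$, $g^{-1}$, $\hat n$, the Riemann tensor and its covariant derivatives; hence if such an operator existed for the stated parameters it would, in particular, define a conformally invariant operator on the flat model $\iota: \mathbb{R}^{d-1}\hookrightarrow\mathbb{R}^d$ with a totally umbilic, conformally flat embedding. It therefore suffices to prove that no conformally invariant operator $\Gamma(\odot^2_\circ T^* M[3-m]) \to \Gamma(\odot^2_\circ T^* \Sigma[m-d+2])$ of transverse order exactly $d-2m+1$ exists on this model. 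On the flat model such operators are precisely the symmetry-breaking (intertwining) differential operators for the pair $(\mathrm{Conf}(S^d),\mathrm{Conf}(S^{d-1}))=(O(d+1,1),O(d,1))$ between the relevant induced bundles, and the subspace of those of bounded transverse order is finite dimensional and explicitly computable.

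First I would decompose the source bundle along the normal direction. Relative to the splitting $TM|_\Sigma = T\Sigma \oplus \langle \hat n\rangle$, the fibre of $\odot^2_\circ T^* M$ splits, as an $O(d-1)$-representation, into a tangential trace-free symmetric $2$-tensor, a tangential covector (the mixed $\hat n$--tangential block), and a single scalar (the $\hat n\hat n$ block). Conformal invariance is compatible with this decomposition, so the construction of $N^{(k)}_{w,w'}$ reduces channel by channel (equivalently, is packaged uniformly in the tractor calculus) to the construction of conformally covariant residue-type (holographic) operators from weighted densities and low-rank tensors on $S^d$ to weighted densities on $S^{d-1}$. For each channel the operator carrying transverse order $k=d-2m+1$ is, on the flat model, unique up to scale: it is the $k$-fold composite of the conformally invariant normal derivative (the suitably weighted tractor operator $\hat n \cdot D$), projected to $\Sigma$.

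The obstruction then appears as the leading transverse coefficient of this composite. Iterating the conformally invariant normal derivative produces, at the $j$-th step, a weight-dependent scalar factor, so the transverse-order-$k$ operator has leading symbol proportional to a product $\prod_{j} c_j$ in which each $c_j$ is affine in the density weight and in $d$. The key computation is to show that, for the weights $w=3-m$, $w'=m-d+2$ and transverse order $d-2m+1$, exactly one factor $c_{j_0}$ vanishes whenever $d$ is odd and $2 \le m \le \tfrac{d-1}{2}$. Since $\dim \Sigma = d-1$ is then even, this vanishing is tied to the critical order $\tfrac{d-1}{2}$ of the even-dimensional boundary, and is the codimension-one analogue of the Fefferman--Graham/Gover--Hirachi obstruction to high-order GJMS operators \cite{GoverHirachi,Matsumoto}. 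Because the top-transverse-order candidate is unique up to scale on the homogeneous model and its leading coefficient is forced to vanish, its true transverse order drops strictly below $d-2m+1$; hence no element of $\mathcal{N}^{(d-2m+1)}_{3-m,m-d+2}(m)$ exists, giving $\mathcal{N}^{(d-2m+1)}_{3-m,m-d+2}(m)=\emptyset$.

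The main obstacle I anticipate is twofold. First, one must rigorously justify the flat-model reduction for a \emph{non}-existence statement: that uniqueness of the top-transverse-order symbol on the homogeneous model, together with vanishing of its leading coefficient, genuinely precludes a natural operator of the prescribed transverse order on \emph{every} embedding in $\mathcal{M}^d$, so that no curvature correction can restore the lost order. Second, and most delicate, is the bookkeeping of the tensor-valued channels: the normal decomposition of $\odot^2_\circ$ mixes under the conformal action, so the factors $c_j$ must be computed with the correct weight shifts in each block, and one must verify that the vanishing factor $c_{j_0}$ cannot be circumvented by trading transverse derivatives between channels. Establishing the exact value of $c_{j_0}$ and confirming its vanishing throughout the stated range is the computational heart of the argument.
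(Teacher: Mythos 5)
Your overall strategy---reduce by naturality to the flat homogeneous model, classify the candidate intertwinors there, and detect the obstruction in a vanishing leading coefficient---is a legitimate and genuinely different framing from the paper's, which works on an arbitrary embedding and tracks the conformal variation of the leading term $\otop\nabla_n^{d-2m+1}$ directly. The two are in fact close in substance: every obstruction term the paper tracks (schematically $\bar\nabla_{\bar\Upsilon}\bar\Delta^{\frac{d-1}{2}-m}t^\top_{ab}$) is curvature-independent and hence already visible on the flat model, and your first anticipated obstacle (justifying the flat-model reduction for a non-existence claim) amounts to the same normalization the paper makes when it writes the putative operator, up to an overall nonzero constant, as $\otop\nabla_n^{d-2m+1}+\text{ltots}$. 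So the architecture is defensible.

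The difficulty is that the argument stops exactly where the content of the lemma begins. First, the claim that the transverse-order-$(d-2m+1)$ candidate on the flat model is \emph{unique up to scale} is asserted rather than proved, and the entire non-existence conclusion rests on it; since the three blocks of $\odot^2_\circ T^*M|_\Sigma$ mix under conformal rescaling (as you note yourself), uniqueness is not channel-by-channel obvious. The paper supplies the corresponding step concretely, by checking that $\bar\Delta^{\frac{d+1}{2}-m}$ applied to $t^\top_{ab}$ is the \emph{only} competitor with the right homogeneity and derivative count whose variation can cancel the offending term $\bar\nabla_{\bar\Upsilon}\bar\Delta^{\frac{d-1}{2}-m}t^\top_{ab}$. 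Second, the ``computational heart''---identifying the factor $c_{j_0}$ and verifying that it vanishes precisely for $w=3-m$, $w'=m-d+2$ throughout $2\le m\le\frac{d-1}{2}$---is deferred rather than carried out; in the paper this is the explicit coefficient $(\frac{d+1}{2}-m)(2m+2w-6)$, which vanishes identically at $w=3-m$. Without that computation nothing in the proposal distinguishes these weights from nearby weights at which the operator \emph{does} exist. Relatedly, your heuristic that the zero is ``tied to the critical order $\frac{d-1}{2}$ of the even-dimensional boundary'' points at the wrong mechanism: the powers of $\bar\Delta$ that occur are all subcritical for $\Sigma$, and the actual vanishing is a weight coincidence, which suggests the zero has not in fact been located. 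As written, this is a plan for a proof rather than a proof.
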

\begin{proof}
Give any embedding $(\Sigma,(M^d,\cc),\iota)$, we must show that there exists no differential operator
$$N^{(d-2m+1)}_{3-m,m-d+2} : \Gamma(\odot^2_\circ T^* M[3-m]) \rightarrow \Gamma(\odot^2_{\circ} T^* \Sigma[m-d+2])$$
with transverse order $d-2m+1$. Without loss of generality, such an operator may be written (up to an overall nonzero multiplicative constant) in a choice of scale $g \in \cc$ the form
$$N^{(d-2m+1)}_{3-m,m-d+2} = \otop \nabla_n^{d-2m+1} + \text{ltots}\,,$$
where $\text{ltots}$ indicates (possibly differential) operators with transverse order strictly smaller than $d-2m+1$. We now consider conformal transformations of the metric, $g \mapsto \Omega^2 g$. For some tensor $t_{ab} \in \Gamma(\odot^2_\circ T^* M)$ with conformal transformation given by 
$$t_{ab} \mapsto \Omega^{w} (t_{ab} + r_{ab})\,,$$
we compute
\begin{align*}
\nabla_n t_{ab} \mapsto \Omega^{w-1}(n^c + s \Upsilon^c)(&\nabla_c t_{ab} + (w-2) \Upsilon_c t_{ab} - \Upsilon_a t_{cb} - \Upsilon_b t_{ac} \\&+ \Upsilon^d t_{da} g_{bc} + \Upsilon^d t_{db} g_{ca}  + \mathcal{O}(r)) \\
=\Omega^{w-1}(\nabla_n t_{ab} + s &\nabla_{\Upsilon} t_{ab} + \mathcal{O}(n \cdot \Upsilon)+ \mathcal{O}(n \cdot t) + \mathcal{O}(\Upsilon \cdot t) \\&+ \mathcal{O}(|\Upsilon|^2) + \mathcal{O}(r))\,.
\end{align*}
where $\mathcal{O}(x)$ denotes any tensor structure that depends on at least one power of $x$ or its derivatives,  $\Upsilon := d \log \Omega$, and $s$ is a unit defining function for $\Sigma \hookrightarrow (M^d,g)$. As $\nabla_n s \= 1$, we find that we may describe the higher-order transformation according to
\begin{align*}
\otop \nabla_n^{d-2m+1} t_{ab} \stackrel{\Sigma}{\mapsto} \bar{\Omega}^{w-d+2m-1} &(\nabla_n^{d-2m+1} t_{ab} + (d-2m) \bar{\nabla}_{\bar{\Upsilon}}  (\nabla_n^{d-2m-1} t_{ab})^\top + \text{lots}(t_{ab}) \\&+ \mathcal{O}(n \cdot \Upsilon) + \mathcal{O}(n \cdot t) + \mathcal{O}(\Upsilon \cdot t) + \mathcal{O}(|\Upsilon|^2) + \mathcal{O}(r)) ^{\otop} \\
\= \bar{\Omega}^{w-d+2m-1}&(\nabla_n^{d-2m+1} t_{ab} + (d-2m) \bar{\nabla}_{\bar{\Upsilon}} (\Delta^{\tfrac{d-1}{2} - m} t_{ab})^\top \\&- (d-2m) \bar{\nabla}_{\bar{\Upsilon}} \bar{\Delta}^{\frac{d-1}{2}-m} t_{ab}^\top + \text{lots}(t_{ab}) \\&+ \mathcal{O}(n \cdot \Upsilon) + \mathcal{O}(n \cdot t) + \mathcal{O}(\Upsilon \cdot t) + \mathcal{O}(|\Upsilon|^2) + \mathcal{O}(r))^{\otop}
\end{align*}
where $\text{lots}(t_{ab})$ stands for any linear differential operator on $t$ that has strictly fewer than $d-2m$ derivatives on $t$. The second identity follows from the first from the simple observation that $\nabla_n^2 = \Delta - \bar{\Delta} \circ \top$  up to lower order derivatives. Crirtically, any such conformally invariant differential operator must include in its universal symbolic formula terms that, after simplification, transform into $\bar{\nabla}_{\bar{\Upsilon}} \circ \top \circ \Delta^{\tfrac{d-1}{2} -m}$ and $\bar{\nabla}_{\bar{\Upsilon}} \bar{\Delta}^{\tfrac{d-1}{2} - m}$.

To see what terms can possibly produce such counterterms, consider the conformal transformation of $\bar{\Delta} t_{ab}^\top$:
\begin{align*}
\bar{\Delta} t_{ab}^{\top} \stackrel{\Sigma}{\mapsto} \bar{\Omega}^{w-2} (&\bar{\Delta} t_{ab}^\top + (d+2w-7) \bar{\nabla}_{\bar{\Upsilon}} t_{ab}^\top \\&+ \mathcal{O}(\bar{\nabla} \bar{\Upsilon}) + \mathcal{O}(\Upsilon \cdot t) + \mathcal{O}(\bar{\nabla} \cdot t^\top) + \mathcal{O}(|\Upsilon|^2) + \mathcal{O}(n \cdot \Upsilon) + \mathcal{O}(n \cdot t) + \mathcal{O}(r)\,.
\end{align*}
Thus, we can see that adding a term of the form $\bar{\Delta}(\Delta^{\tfrac{d-1}{2}-m} t_{ab})^\top$ is sufficient to cancel out transformations that result in $\bar{\nabla}_{\bar{\Upsilon}} (\Delta^{\tfrac{d-1}{2} -m} t_{ab})^\top$, at least at leading order---subsequent lower order terms, and terms with other tensor structures, may be required to cancel out terms that have been ignored here.

But it is straightforward to check that there are no other operators with the correct homogeneity and derivative order aside from $\bar{\Delta}$ that transform this way. Thus, the unique counterterm that can cancel $\bar{\nabla}_{\bar{\Upsilon}} \bar{\Delta}^{\tfrac{d-1}{2} -m} t^\top_{ab}$ takes the form $\bar{\Delta}^{\tfrac{d+1}{2}-m} t^\top_{ab}$. However, upon iterating the above transformation of $\bar{\Delta} t^{\top}_{ab}$, we find that
\begin{align*}
\otop \bar{\Delta}^{\tfrac{d+1}{2}-m} t^\top_{ab} \stackrel{\Sigma}{\mapsto} \Omega^{w-d+2m-1}(&\bar{\Delta}^{\tfrac{d+1}{2}-m} t^\top_{ab} + (\tfrac{d+1}{2}-m)(2m+2w-6) \bar{\nabla}_{\bar{\Upsilon}} \bar{\Delta}^{\tfrac{d-1}{2} -m} t^{\top}_{ab} \\&+
\mathcal{O}(\bar{\nabla} \bar{\Upsilon}) + \mathcal{O}(\Upsilon \cdot t) + \mathcal{O}(\bar{\nabla} \cdot t^\top) + \mathcal{O}(|\Upsilon|^2) \\&+ \mathcal{O}(n \cdot \Upsilon) + \mathcal{O}(n \cdot t) + \mathcal{O}(r))^{\otop}\,.
\end{align*}
However, when $w = 3-m$, the required counterterm vanishes. Thus, we have established that there is no natural hypersurface differential operator that may be added to $\otop \nabla_n^{d-2m+1}$ such that, on weight $3-m$ tensors of the specified type, the conformal transformation vanishes entirely. Hence, no such natural conformal hypersurface differential operators exist.
\end{proof}


Using this lemma, we may now prove the main theorem.
\begin{proof}[Proof of~\ref{main-theorem}]
When $d \geq 4$ is even, such conformal fundamental forms have already been constructed; see~\cite{Blitz1} for the construction up to order $d-1$.

When $d \geq 5$ is odd, the construction of each conformal fundamental form up to order $\tfrac{d+1}{2}$ was given in~\cite{BGW1}. It suffices, then, to establish that there does not exist a section of $\mathcal{M}^d \rightarrow E_{d-k+1}$ that is a right-inverse of the natural projection and has transverse order $d-k$ for each $k \in \{2, \ldots, \tfrac{d-1}{2}\}$.

Aiming at a contradiction, we consider any embedding $(\Sigma,(M^d,\cc),\iota)$ and suppose that there exists such a section, call it $\FF{d-k+1}$. Then, for a given choice of trivialization $g \in \cc$ and a choice of orthonormal coordinates in a small open neighborhood of a point $p \in \Sigma$, call them $(s, x^1, \cdots, x^{d-1})$ with $s$ a defining function for $\Sigma$ and $|ds|_g =1$, it follows by definition that
$$\FF{d-k+1}_{ab} \= \otop \partial_s^{d-k} g_{ab} + \text{ltots}(g)\,.$$

On the other hand, consider the normal derivative operator $\ID_{\sigma} : \Gamma(\odot^2_{\circ} T^* M[w]) \rightarrow \Gamma(\odot^2_{\circ} T^*M[w-1])$ defined in ~\cite{BGW1} and the conformal extension of the second fundamental form $\IIo^{\rm e}_{ab} := \nabla_{(a} n_{b)_{\circ}} +\mathring{P}_{ab} s$. From the proof of~\cite[Lemma 4.7]{BGW1}, we see that the tensor
$$\FF{k}^{\rm e} := \ID_{\sigma}^{k-2} \IIo^{\rm e}$$
is well-defined, and upon pull back to $\Sigma$, is a non-zero multiple of $\FF{k}$. As such, we may see that that
$$\FF{k}^{\rm e} = \alpha \operatorname{tf} \circ \, \partial_{s}^{k-1} g + \text{ltots}(g) + \mathcal{O}(s)\,,$$
where $\alpha$ is a non-zero constant. Observe that the term $\mathcal{O}(s)$ may in principle depend on higher-order derivatives of the metric, and poses a risk upon transverse differentiation of cancelling the term proportional to $\alpha$. However, as $\otop \nabla_n^{d-2k+1} \FF{k}^{\rm e}$ captures the same leading transverse order structure as $\otop \nabla_n^{d-k-3} \IIo^{\rm e}$ by construction, it follows that
$$\FF{d-k-1}_{ab} \= (\beta\otop \nabla_n^{d-2k+1} + \text{ltots})\,\FF{k}^{\rm e}_{ab} \,,$$
where $\beta$ is a non-zero constant.

Now observe that as the leading order term in $\FF{k}^{\rm e}$ is proportional to $\operatorname{tf} \circ \, \partial_s^{k-1} g$, it follows that, by considering those embeddings $\iota_h : \Sigma \hookrightarrow (M,g+s^{k-1}h)$ with $h \in \Gamma(\odot^2 T^* M)$, the map $\FF{k}^{\rm e} : \mathcal{M}^d \rightarrow \Gamma(\odot^2_{\circ} T^* M[3-k])$ is surjective. But then, there exists some operator
$$\beta \otop \nabla_n^{d-2k+1} + \text{ltots} : \Gamma(\odot^2_\circ T^* M[3-k]) \rightarrow \Gamma(\odot^2_{\circ} T^* \Sigma[2-d+k])\,.$$

But from Lemma~\ref{operator-nonexistence}, we have established that no such operator exists, as
$$\mathcal{N}_{3-k,k-d+2}^{(d-2k+1)}(\Sigma,(M,\cc),\iota) = \emptyset\,.$$
Thus we have a contradiction, completing the proof.
\end{proof}

\begin{remark}
In~\cite{BGW1}, it was shown that there exist \textit{conditional} conformal fundamental forms that appear to contradict Theorem~\ref{main-theorem}. These are sections of $\odot^2_{\circ} T^* \Sigma$ that have all of the features of conformal fundamental forms, except they are only conformally-invariant when certain lower-order conformal fundamental forms vanish. However, as defined in this work, conformal fundamental forms are functionals of conformal hypersurface embeddings rather than specific sections on specific embeddings. Hence, the non-existence proof in this work merely shows that there are no \textit{non-conditional} high-order conformal fundamental forms in odd dimensions.
\end{remark}

\section*{Acknowledgements}
The author would like to thank Jeffrey Case and A. Rod Gover for enlightening conversations that informed on the creation of this work during DGA 2025.

\bibliographystyle{plain}
\bibliography{biblio}

\end{document}